\newtheorem{theorem}{Theorem}
\newtheorem{example}{Example}
\newtheorem{lemma}{Lemma}
\newtheorem{remark}{Remark}
\numberwithin{equation}{section}
\begin{document}
\title[Hermite-Hadamard type inequalities]{Some new Hermite-Hadamard type inequalities
for two operator convex functions}
\author[A. G. Ghazanfari]{ Amir G. Ghazanfari}
\address{Department of Mathematics\\
Lorestan University, P.O. Box 465\\
Khoramabad, Iran.}
\email{ghazanfari.amir@gmail.com}

\begin{abstract}\noindent
In this paper we establish some new Hermite-Hadamard type inequalities for two operator convex functions of
selfadjoint operators in Hilbert spaces.\\
Keywords:  Hermite-Hadamard inequality, operator convex functions.\\
Mathematics Subject Classification (2010). Primary 47A63; Secondary 26D10.
\end{abstract}
\maketitle

\section{introduction}\vspace{.2cm} \noindent
The following inequality holds for any convex function $f$ defined on $\mathbb{R}$ and $a,b \in \mathbb{R}$, with $a<b$
\begin{equation}\label{1.1}
f\left(\frac{a+b}{2}\right)\leq \frac{1}{b-a}\int_a^b f(x)dx\leq \frac{f(a)+f(b)}{2}
\end{equation}
Both inequalities hold in the reversed direction if $f$ is concave. We note that
Hermite-Hadamard's inequality may be regarded as a refinement of the concept of convexity and it follows easily from
Jensen's inequality.
The classical Hermite-Hadamard inequality provides estimates of the mean value
of a continuous convex function $f : [a, b] \rightarrow \mathbb{R}$.

Let $X$ be a vector space, $ x, y \in X$, $ x\neq y$. Define the segment
\[
[x, y] :=\{(1- t)x + ty~ :~ t \in [0, 1]\}.
\]
We consider the function $f : [x, y]\rightarrow \mathbb{R}$ and the associated function
\begin{align*}
&g(x, y) : [0, 1] \rightarrow \mathbb{R},\\
&g(x, y)(t) := f((1 - t)x + ty), t \in [0, 1].
\end{align*}
Note that f is convex on $[x, y]$ if and only if $g(x, y)$ is convex on $[0, 1]$.
For any convex function defined on a segment $[x, y] \subseteq X$, we have the Hermite-Hadamard integral inequality
\begin{equation}\label{1.2}
f\left(\frac{x+y}{2}\right)\leq \int_0^1 f((1-t)x+ty)dt\leq \frac{f(x)+f(y)}{2},
\end{equation}
which can be derived from the classical Hermite-Hadamard inequality (1.1) for the
convex function $g(x, y) : [0; 1] \rightarrow \mathbb{R}$.

In recent years several extensions and generalizations have been considered for
classical convexity. A number of papers have been written on this inequality providing
some inequalities analogous to Hadamard's inequality
given in (\ref{1.1}) involving two convex functions, see \cite{pach, tun}.
The main purpose of this paper is to establish some new integral inequalities
for two operator convex functions of
selfadjoint operators in Hilbert spaces which generalize Theorem 1 in \cite{pach} and Theorem 3 in \cite{tun}.

In order to do that we need the following preliminary definitions and results.
Let $A$ be a bounded self adjoint linear operator on a complex Hilbert space $(H; \langle .,.\rangle)$.
The Gelfand map establishes a $*$-isometrically isomorphism $\Phi$ between the set
$C (Sp (A))$ of all continuous functions defined on the spectrum of $A$, denoted $Sp (A)$,
and the $C^*$-algebra $C^*(A)$ generated by $A$  and the identity operator $1_H$ on $H$ as
follows (see for instance \cite[p.3]{fur}).
For any $f, g\in C (Sp (A))$ and any $\alpha,\beta\in\mathbb{C}$ we have
\begin{align*}
&(i)~\Phi(\alpha f+\beta g)=\alpha\Phi(f)+\beta\Phi(g);\\
&(ii)~\Phi(fg)=\Phi(f)\Phi(g)~\text{and}~\Phi(f^*)=\Phi(f)^*;\\
&(iii)~\|\Phi(f)\|=\|f\|:=\sup_{t\in Sp(A)} |f(t)|;\\
&(iv)~\Phi(f_0)=1 ~\text{and}~ \Phi(f_1)=A,~\text{ where } f_0(t)=1~ \text{and } f_1(t)=t, \text{for}~ t\in Sp(A).
\end{align*}
With this notation we define
\[ f (A) := \Phi(f) \text{ for all } f \in C (Sp (A))\]
and we call it the continuous functional calculus for a bounded selfadjoint operator $A$.
If $A$ is a bounded selfadjoint operator and $f$ is a real valued continuous function on $Sp (A)$,
then $f (t) \geq 0$ for any $t \in Sp (A)$ implies that $f (A) \geq 0$, i.e., $f (A)$ is a positive
operator on $H$. Moreover, if both $f$ and $g$ are real valued functions on $Sp (A)$ then
the following important property holds:
\begin{align*}
&(P) &&f (t) \geq g (t) \text{ for any } t \in Sp (A) \text{ implies that } f (A)\geq g (A)
\end{align*}
in the operator order in $B(H)$.

A real valued continuous function $f$ on an interval $I$ is said to be operator convex
(operator concave) if
\begin{align*}
&(OC)\quad &&f ((1 -\lambda)A +\lambda B)\leq (\geq) (1-\lambda) f (A) + \lambda f (B)
\end{align*}
in the operator order in $B(H)$, for all $ \lambda\in [0, 1] $ and for every bounded selfadjoint operators $A$ and $B$
in $B(H)$ whose spectra are contained in $I$.

Dragomir in \cite{dra2} has proved a Hermite-Hadamard type inequality for operator convex function as follows:

\begin{theorem}\label{t1}
Let $f : I \rightarrow \mathbb{R}$ be an operator convex function on the interval $I$. Then
for any selfadjoint operators $A$ and $B$ with spectra in $I$, the following inequality holds
\begin{multline*}
\left(f\left(\frac{A+B}{2}\right)\leq\right)\frac{1}{2}\left[f\left(\frac{3A+B}{4}\right)+f\left(\frac{A+3B}{4}\right)\right]\\
\leq\int_0^1 f((1-t)A+tB))dt\\
\leq\frac{1}{2}\left[f\left(\frac{A+B}{2}\right)+\frac{f(A)+f(B)}{2}\right]\left(\leq\frac{f(A)+f(B)}{2}\right).
\end{multline*}
\end{theorem}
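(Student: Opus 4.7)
The plan is to treat the chain of five inequalities as a refinement of the operator Hermite--Hadamard inequality
\[
f\!\left(\tfrac{A+B}{2}\right)\leq \int_0^1 f((1-t)A+tB)\,dt\leq \tfrac{f(A)+f(B)}{2},
\]
obtained by applying that two--sided inequality separately on the two halves of the segment joining $A$ and $B$. I will first make sure that the two--sided operator inequality above is at my disposal: it follows from the definition (OC) by integrating $f((1-t)A+tB)\leq (1-t)f(A)+tf(B)$ on $[0,1]$ in the operator order using property (P), and by writing $\tfrac{A+B}{2}=\tfrac12\bigl[(1-t)A+tB\bigr]+\tfrac12\bigl[tA+(1-t)B\bigr]$, applying (OC), and integrating, using the symmetry $t\mapsto 1-t$ of the integrand.

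The outer inequalities are essentially free. The leftmost one is just (OC) applied to the average $\tfrac{A+B}{2}=\tfrac12\!\cdot\!\tfrac{3A+B}{4}+\tfrac12\!\cdot\!\tfrac{A+3B}{4}$; the rightmost one is (OC) applied to $\tfrac{A+B}{2}=\tfrac12 A+\tfrac12 B$, yielding $f\!\left(\tfrac{A+B}{2}\right)\leq\tfrac{f(A)+f(B)}{2}$, which after averaging with $\tfrac{f(A)+f(B)}{2}$ gives the bound.

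For the two middle inequalities I would split
\[
\int_0^1 f((1-t)A+tB)\,dt=\int_0^{1/2}+\int_{1/2}^1
\]
and perform the substitutions $t=s/2$ and $t=\tfrac12+s/2$ respectively. A short computation rewrites the two pieces as
\[
\tfrac12\!\int_0^1 f\!\left((1-s)A+s\,\tfrac{A+B}{2}\right)ds
\quad\text{and}\quad
\tfrac12\!\int_0^1 f\!\left((1-s)\tfrac{A+B}{2}+sB\right)ds.
\]
Now I apply the operator Hermite--Hadamard inequality recalled at the start, once to the pair $\bigl(A,\tfrac{A+B}{2}\bigr)$ and once to the pair $\bigl(\tfrac{A+B}{2},B\bigr)$. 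The midpoints are $\tfrac{3A+B}{4}$ and $\tfrac{A+3B}{4}$, producing the lower bound on the integral, while the endpoint averages $\tfrac{f(A)+f((A+B)/2)}{2}$ and $\tfrac{f((A+B)/2)+f(B)}{2}$ add up to the claimed upper bound $\tfrac12\bigl[f(\tfrac{A+B}{2})+\tfrac{f(A)+f(B)}{2}\bigr]$.

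The main obstacle is conceptually mild but technically important: all arithmetic of inequalities here takes place in the operator order of $B(H)$, so I must consistently invoke property (P) when integrating inequalities of operator--valued continuous functions and when adding the two halves. Because (P) is preserved under nonnegative linear combinations and under Bochner--type integration of continuous operator--valued functions of $t\in[0,1]$, the argument goes through verbatim from the scalar template, the only real verification being that the two affine changes of variable indeed identify $\int_0^{1/2}$ and $\int_{1/2}^1$ with the claimed integrals over $[0,1]$ with the new endpoints.
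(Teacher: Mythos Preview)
Your argument is correct and is, in fact, the standard proof of this refinement (the one given by Dragomir in \cite{dra2}). Note, however, that the present paper does \emph{not} prove Theorem~\ref{t1}: it is quoted from \cite{dra2} as a preliminary result, so there is no ``paper's own proof'' to compare against here. Your derivation---establishing the basic operator Hermite--Hadamard inequality from (OC), then splitting $\int_0^1=\int_0^{1/2}+\int_{1/2}^1$, reparametrising each half as an integral over the segments $[A,\tfrac{A+B}{2}]$ and $[\tfrac{A+B}{2},B]$, and applying the basic inequality on each piece---is exactly Dragomir's approach and all steps check out, including the two substitutions $t=s/2$ and $t=\tfrac12+\tfrac s2$ and the identification of the midpoints $\tfrac{3A+B}{4}$, $\tfrac{A+3B}{4}$.

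One small remark on your invocation of ``property (P)'': in the paper, (P) refers specifically to the monotonicity of the functional calculus for a \emph{single} selfadjoint operator, not to preservation of the operator order under integration. What you actually use (and correctly state in your last paragraph) is that the cone of positive operators in $B(H)$ is closed under addition, nonnegative scalar multiplication, and limits, so that a pointwise operator inequality between continuous $B(H)$-valued functions on $[0,1]$ survives Bochner integration. That is a routine fact, but it is not (P); you may want to rename it to avoid confusion with the paper's terminology.
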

A generalization of Theorem \ref{t1} for operator preinvex
functions of selfadjoint operators in Hilbert spaces is established in \cite{gha}, as follows:
\begin{theorem}\label{t2}
Let $S\subseteq B(H)_{sa}$ be an invex set with respect to $\eta:S\times S\rightarrow B(H)_{sa}$ and $\eta$ satisfies condition $C$. If for every $A,B\in S$ and $V=A+\eta(B,A)$ the function $f : I \rightarrow \mathbb{R}$ is operator preinvex with respect to $\eta$ on $\eta-$path $P_{AV}$ with spectra of $A$ and spectra of $V$ in the interval $I$. Then the following inequality holds
\begin{align}\label{1.3}
f\left(\frac{A+V}{2}\right)&\leq \frac{1}{2}\left[ f\left(\frac{3A+V}{4}\right)+f\left(\frac{A+3V}{4}\right)\right]\notag\\
&\leq \int_0^1 f(A+t\eta(B,A))dt\\
&\leq\frac{1}{2}\left[ f\left(\frac{A+V}{2}\right)+\frac{f(A)+f(V)}{2}\right]\leq \frac{f(A)+f(B)}{2}\notag.
\end{align}
\end{theorem}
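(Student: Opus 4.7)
My plan is to reduce the theorem to an application of Theorem \ref{t1} on the $\eta$-path $P_{AV}$, combined with a direct use of operator preinvexity for the rightmost inequality. Under condition $C$ on $\eta$, the standard Mohan--Neogy-type identity
$\eta(A+t_2\eta(B,A),\,A+t_1\eta(B,A)) = (t_2-t_1)\eta(B,A)$
holds for all $t_1,t_2\in[0,1]$, so every usual convex combination of two points on $P_{AV}$ coincides with a preinvex combination along $P_{AV}$. In particular, for any $U_1,U_2\in P_{AV}$ and $\lambda\in[0,1]$, operator preinvexity of $f$ along the path yields
\[ f((1-\lambda)U_1+\lambda U_2)\leq (1-\lambda)f(U_1)+\lambda f(U_2),\]
which is precisely the operator convex inequality restricted to $P_{AV}$.

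Next, since $(1-t)A+tV=A+t\eta(B,A)$, every operator appearing in the first three inequalities of the theorem --- namely $\frac{3A+V}{4}$, $\frac{A+V}{2}$, $\frac{A+3V}{4}$ and $(1-t)A+tV$ --- lies on $P_{AV}$. I would therefore rerun Dragomir's proof of Theorem \ref{t1} verbatim, invoking the path-convexity established above at each step, to obtain
\begin{multline*}
f\left(\frac{A+V}{2}\right)\leq \frac{1}{2}\left[f\left(\frac{3A+V}{4}\right)+f\left(\frac{A+3V}{4}\right)\right]\\
\leq \int_0^1 f(A+t\eta(B,A))\,dt\leq \frac{1}{2}\left[f\left(\frac{A+V}{2}\right)+\frac{f(A)+f(V)}{2}\right].
\end{multline*}

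For the final inequality, I would apply operator preinvexity of $f$ on $P_{AV}$ at two distinguished parameter values. At $t=1$, preinvexity gives $f(V)=f(A+\eta(B,A))\leq f(B)$, hence $\frac{f(A)+f(V)}{2}\leq \frac{f(A)+f(B)}{2}$; at $t=1/2$, it gives $f\!\left(\frac{A+V}{2}\right) = f\!\left(A+\frac{1}{2}\eta(B,A)\right)\leq \frac{f(A)+f(B)}{2}$. Averaging these two bounds produces the desired rightmost estimate.

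The main obstacle is the first step: condition $C$ must be used carefully to confirm that operator preinvexity of $f$ on the $\eta$-path translates into the usual operator convex inequality for every pair of operators in $P_{AV}$. Once this ``path-convexification'' is in place, the rest of the argument is essentially a path-restricted reprise of Dragomir's proof of Theorem \ref{t1}, supplemented by the two single-point applications of preinvexity needed for the last estimate.
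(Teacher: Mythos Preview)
The paper does not prove Theorem~\ref{t2}; it is quoted from \cite{gha} as a known generalization of Theorem~\ref{t1}, so there is no in-paper argument to compare against. Your outline is nonetheless the natural one and is essentially what the cited source does: use condition~$C$ to turn operator preinvexity along $P_{AV}$ into ordinary operator convexity on the segment $[A,V]$, then replay Dragomir's proof of Theorem~\ref{t1} on that segment, and finally handle the rightmost inequality by two direct applications of preinvexity at $t=1$ and $t=\tfrac12$.

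One point worth tightening: the identity you invoke, $\eta(A+t_2\eta(B,A),A+t_1\eta(B,A))=(t_2-t_1)\eta(B,A)$, is not one of the two defining relations of condition~$C$ but a consequence of them; it has to be derived (e.g.\ by writing one point of the path as the base and the other as a point on the sub-path through $B$ or through $A$, then applying the two relations of condition~$C$). Once that is in hand, preinvexity on $P_{AV}$ indeed yields $f((1-\lambda)U_1+\lambda U_2)\le(1-\lambda)f(U_1)+\lambda f(U_2)$ for all $U_1,U_2\in P_{AV}$, and since every operator used in Dragomir's argument for Theorem~\ref{t1} is a convex combination of the endpoints, the transplant goes through verbatim. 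Your treatment of the last inequality via $f(V)\le f(B)$ and $f\!\left(\frac{A+V}{2}\right)\le\frac{f(A)+f(B)}{2}$ is correct and is the standard way to close the chain.
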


\section{the main results}

Let $f,g : I \rightarrow \mathbb{R}$ be operator convex functions on the interval $I$. Then
for any selfadjoint operators $A$ and $B$ on a Hilbert space $H$ with spectra in $I$, we define real functions
$M(A,B)$, $N(A,B)$ and $P(A,B)$ on $H$ by
\begin{align*}
M(A,B)(x)&=\langle f(A)x,x\rangle\langle g(A)x,x\rangle+\langle f(B)x,x\rangle\langle g(B)x,x\rangle\quad &&(x\in H),\\
N(A,B)(x)&=\langle f(A)x,x\rangle\langle g(B)x,x\rangle+\langle f(B)x,x\rangle\langle g(A)x,x\rangle\quad &&(x\in H),\\
P(A,B)(x)&=\langle [f(A)g(A)+f(B)g(B)]x,x\rangle\quad &&(x\in H).
\end{align*}
\begin{lemma}
Let $f:I\rightarrow \Bbb R$ be a continuous function on the interval $I$. Then for every two selfadjoint operators $A,B$ with spectra
in $I$ the function $f$ is operator convex on $[A,B]$ if and only if the function $\varphi_{x,A,B}:[0,1]\rightarrow \Bbb R$ defined by
\begin{equation}\label{2.1}
\varphi_{x,A,B}=\langle f((1-t)A+tB) x,x \rangle
\end{equation}
is convex on $[0,1]$ for every $x\in H$ with $\|x\|=1$.
\end{lemma}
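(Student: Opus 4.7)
\bigskip
\noindent\textbf{Proof proposal.} The plan is to exploit the standard fact that for bounded selfadjoint operators $T,S$ on $H$ one has $T\leq S$ in the operator order if and only if $\langle Tx,x\rangle\leq\langle Sx,x\rangle$ for every unit vector $x\in H$. This reduces the equivalence to a purely scalar statement about the real-valued function $\varphi_{x,A,B}$.

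For the direct implication I would fix two points $t_{1},t_{2}\in[0,1]$ and a parameter $\lambda\in[0,1]$, and set $C=(1-t_{1})A+t_{1}B$ and $D=(1-t_{2})A+t_{2}B$, which both belong to the segment $[A,B]$. A short linear computation yields
\[
(1-\lambda)C+\lambda D=(1-s)A+sB,\qquad s:=(1-\lambda)t_{1}+\lambda t_{2}\in[0,1].
\]
Applying operator convexity of $f$ on $[A,B]$ to the pair $C,D$ and then pairing both sides with a unit vector $x$ gives
\[
\varphi_{x,A,B}\bigl((1-\lambda)t_{1}+\lambda t_{2}\bigr)\leq(1-\lambda)\varphi_{x,A,B}(t_{1})+\lambda\varphi_{x,A,B}(t_{2}),
\]
which is exactly the convexity of $\varphi_{x,A,B}$ on $[0,1]$.

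For the converse, let $C,D\in[A,B]$, write $C=(1-t_{1})A+t_{1}B$ and $D=(1-t_{2})A+t_{2}B$ for suitable $t_{1},t_{2}\in[0,1]$, and let $\lambda\in[0,1]$. Using the same identity for $(1-\lambda)C+\lambda D$ and the assumed scalar convexity of $\varphi_{x,A,B}$ at $t_{1},t_{2}$, I obtain
\[
\langle f((1-\lambda)C+\lambda D)x,x\rangle\leq(1-\lambda)\langle f(C)x,x\rangle+\lambda\langle f(D)x,x\rangle
\]
for every unit vector $x\in H$. Since $f((1-\lambda)C+\lambda D)$ and $(1-\lambda)f(C)+\lambda f(D)$ are selfadjoint, the characterisation of the operator order via quadratic forms upgrades this to $f((1-\lambda)C+\lambda D)\leq(1-\lambda)f(C)+\lambda f(D)$, which is operator convexity of $f$ on $[A,B]$.

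The argument is essentially bookkeeping, and there is no substantial obstacle. The only point that requires a moment's care is making sure that the definition of ``operator convex on $[A,B]$'' is interpreted as convexity with respect to arbitrary pairs of operators $C,D$ lying on the segment (not merely the endpoints $A$ and $B$), which is precisely what matches up with convexity of $\varphi_{x,A,B}$ at arbitrary $t_{1},t_{2}\in[0,1]$ rather than only at $0$ and $1$.
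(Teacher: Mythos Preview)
Your proof is correct and follows essentially the same route as the paper's: both directions hinge on the identity $(1-\lambda)\bigl((1-t_{1})A+t_{1}B\bigr)+\lambda\bigl((1-t_{2})A+t_{2}B\bigr)=(1-s)A+sB$ with $s=(1-\lambda)t_{1}+\lambda t_{2}$, together with the equivalence between operator inequalities and scalar inequalities for all unit vectors. Your write-up is in fact slightly more explicit than the paper's about invoking this last equivalence to pass from the quadratic-form inequality back to the operator order in the converse direction.
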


\begin{proof}
Let $f$ be operator convex on $[A,B]$ then for any $t_{1} , t_{2} \in [0,1] $ and $\alpha , \beta \geq 0 $ with $\alpha + \beta = 1$ we have %
\begin{multline*}
\varphi_{x,A,B} ( \alpha t_{1} + \beta t_{2} ) =
 \langle f((1-(\alpha t_{1} +\beta t_{2} ))A +(\alpha t_{1} +\beta t_{2} )B)x,x \rangle\\
=\langle f(\alpha [(1-t_{1})A+t_{1} B]+\beta [(1-t_{2} )A+t_{2} B])x,x \rangle\\
\leq \alpha \langle f((1-t_{1} )A+t_{1} B)x,x\rangle + \beta \langle f((1-t_{2})A +t_{2} B)x,x\rangle \\
= \alpha \varphi_{x,A,B} (t_{1} )+\beta \varphi_{x,A,B}(t_2).
\end{multline*}
showing that $\varphi_{x,A,B}$ is a convex function on $[0, 1]$.

Let now $\varphi_{x,A,B}$ be convex on $[0,1]$, we show that $f$ is operator convex on $[A,B]$.
For every  $C=(1-t_{1})A+t_{1}B$ and $D=(1-t_{2})A+t_{2}B$ we have
\begin{multline*}
\langle f(\lambda C+(1-\lambda)D)x,x\rangle\\
=\langle f[\lambda((1-t_{1})A+t_{1}B)+(1-\lambda)((1-t_{2})A+t_{2}B)]x,x\rangle\\
=\langle f[(1-(\lambda t_{1}+(1-\lambda)t_{2}))A+(\lambda t_{1}+(1-\lambda)t_{2})B]x,x\rangle\\
=\varphi_{x,A,B}(\lambda t_{1}+(1-\lambda)t_{2})
\leq\lambda \varphi_{x,A,B}(t_{1})+(1-\lambda)\varphi_{x,A,B}(t_{2})\\
=\lambda \langle f((1-t_{1})A+t_{1}B)x,x\rangle+(1-\lambda)\langle f((1-t_{2})A+t_{2}B)x,x\rangle\\
\leq\lambda \langle f(C)x,x\rangle+(1-\lambda)\langle f(D)x,x\rangle.
\end{multline*}
\end{proof}

\begin{theorem}\label{t3}
Let $f,g : I \rightarrow \mathbb{R^+}$ be operator convex functions on the interval $I$. Then
for any selfadjoint operators $A$ and $B$ on a Hilbert space $H$ with spectra in $I$, the inequality

\begin{multline}\label{2.2}
\int_{0}^{1}\langle f(tA+(1-t)B)x,x\rangle\langle g(tA+(1-t)B)x,x\rangle dt\\
\leq\frac{1}{3}M(A,B)(x)+\frac{1}{6}N(A,B)(x).
\end{multline}

holds for any $x\in H$ with $\|x\| = 1$.
\end{theorem}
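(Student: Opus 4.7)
My plan is to reduce the operator-valued problem to a scalar one using the preceding lemma. Fix $x \in H$ with $\|x\|=1$, and define the two scalar functions
\[
\varphi(t) := \langle f(tA+(1-t)B)x,x\rangle, \qquad \psi(t) := \langle g(tA+(1-t)B)x,x\rangle
\]
on $[0,1]$. Since $f$ and $g$ are operator convex on $I$ and the spectra of $A,B$ lie in $I$, the lemma (applied with $A$ and $B$ interchanged, which only amounts to a reflection $t\mapsto 1-t$) tells me that both $\varphi$ and $\psi$ are convex on $[0,1]$. Therefore, for every $t\in[0,1]$,
\[
\varphi(t) \leq t\,\varphi(1)+(1-t)\,\varphi(0) = t\,\langle f(A)x,x\rangle + (1-t)\,\langle f(B)x,x\rangle,
\]
and analogously for $\psi$ with $g$ in place of $f$.

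The key observation is that because $f,g$ take values in $\mathbb{R}^{+}$, the operators $f(A), f(B), g(A), g(B)$ are positive, so $\varphi(t),\psi(t)\geq 0$ for all $t$. This nonnegativity is exactly what is needed to multiply the two convexity bounds and preserve the inequality:
\[
\varphi(t)\psi(t) \leq \bigl[t\langle f(A)x,x\rangle + (1-t)\langle f(B)x,x\rangle\bigr]\bigl[t\langle g(A)x,x\rangle + (1-t)\langle g(B)x,x\rangle\bigr].
\]
Expanding the product on the right separates it into three terms with coefficients $t^{2}$, $t(1-t)$, and $(1-t)^{2}$, multiplying, respectively, $\langle f(A)x,x\rangle\langle g(A)x,x\rangle$, the cross terms $\langle f(A)x,x\rangle\langle g(B)x,x\rangle + \langle f(B)x,x\rangle\langle g(A)x,x\rangle$, and $\langle f(B)x,x\rangle\langle g(B)x,x\rangle$.

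I then integrate both sides over $t\in[0,1]$ and use the elementary values $\int_{0}^{1} t^{2}\,dt = \int_{0}^{1}(1-t)^{2}\,dt = \tfrac{1}{3}$ and $\int_{0}^{1} t(1-t)\,dt = \tfrac{1}{6}$. Collecting terms gives exactly $\tfrac{1}{3}M(A,B)(x) + \tfrac{1}{6}N(A,B)(x)$ on the right-hand side, which is the desired bound.

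There is no real obstacle here once the lemma is in hand; the only point requiring care is the justification for multiplying two inequalities, which relies essentially on $f,g \geq 0$ (hence the hypothesis $f,g:I\to\mathbb{R}^{+}$). The rest is the convexity estimate, term-by-term expansion, and three elementary integrals.
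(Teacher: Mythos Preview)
Your argument is correct and follows essentially the same route as the paper: bound each factor pointwise by the convex combination of endpoint values, multiply using nonnegativity, expand, and integrate the elementary polynomials in $t$. The only cosmetic difference is that you invoke the lemma to obtain the scalar bounds $\varphi(t)\le t\varphi(1)+(1-t)\varphi(0)$, whereas the paper obtains the same inequalities directly from the definition of operator convexity applied to $f(tA+(1-t)B)$ and $g(tA+(1-t)B)$.
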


\begin{proof}
For $x\in H$ with $\|x\|=1$ and $t\in [0,1]$, we have
\begin{equation}\label{2.3}
\langle \big(tA+(1-t)B\big)x,x\rangle =t\langle Ax,x\rangle+(1-t)\langle Bx,x\rangle \in I,
\end{equation}
since $\langle Ax,x\rangle\in Sp(A)\subseteq I$ and $\langle Bx,x\rangle\in Sp(B)\subseteq I$.

Continuity of $f,g$ and (\ref{2.3}) imply that the operator valued integrals $\int_0^1 f(tA+(1-t)B)dt$,
 $~\int_0^1 g(tA+(1-t)B)dt$ and $\int_0^1 (fg)(tA+(1-t)B)dt$ exist.

Since $f$ and $g$ are operator convex, therefore for $t$ in $[0,1]$ and $x\in H$ we have
\begin{equation}\label{2.4}
\langle f(tA+(1-t)B)x,x\rangle\leq\langle (tf(A)+(1-t)f(B))x,x\rangle,
\end{equation}
\begin{equation}\label{2.5}
\langle g(tA+(1-t)B)x,x\rangle\leq\langle(tg(A)+(1-t)g(B))x,x\rangle.
\end{equation}

From (\ref{2.4}) and (\ref{2.5}) we obtain

\begin{multline}\label{2.6}
\langle f(tA+(1-t)B)x,x\rangle\langle g(tA+(1-t)B)x,x\rangle\\
\leq t^2\langle f(A)x,x\rangle\langle g(A)x,x\rangle+(1-t)^2\langle f(B)x,x\rangle\langle g(B)x,x\rangle\\
+t(1-t)\left[\langle f(A)x,x\rangle\langle g(B)x,x\rangle+\langle f(B)x,x\rangle \langle g(A)x,x\rangle\right].
\end{multline}

Integrating both sides of (\ref{2.6}) over $[0,1]$ we get the required inequality (\ref{2.2}).
\end{proof}

\begin{theorem}\label{t4}
Let $f,g : I \rightarrow \mathbb{R}$ be operator convex functions on the interval $I$. Then
for any selfadjoint operators $A$ and $B$ on a Hilbert space $H$ with spectra in $I$, the inequality

\begin{multline}\label{2.7}
\left\langle f\left(\frac{A+B}{2}\right)x,x\right\rangle\left\langle g\left(\frac{A+B}{2}\right)x,x\right\rangle\\
\leq\frac{1}{2}\int_{0}^{1}\left\langle f(tA+(1-t)B)x,x\rangle\langle g(tA+(1-t)B)x,x\right\rangle dt \\
+\frac{1}{12}M(A,B)(x)+\frac{1}{6}N(A,B)(x),
\end{multline}

holds for any $x\in H$ with $\|x\| = 1$.
\end{theorem}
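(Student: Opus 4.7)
The plan is to reduce the inequality to the midpoint form of Hermite--Hadamard applied separately to $f$ and $g$, and then to combine the two scalar inequalities via the product technique already used in the proof of Theorem \ref{t3}. Set $u(t)=tA+(1-t)B$ and $v(t)=(1-t)A+tB$, so that $\frac{u(t)+v(t)}{2}=\frac{A+B}{2}$ for every $t\in[0,1]$. Operator convexity of $f$ gives
\[ f\!\left(\tfrac{A+B}{2}\right) \leq \tfrac{1}{2}\bigl[f(u(t))+f(v(t))\bigr], \]
and similarly for $g$. Taking $\langle\cdot\, x,x\rangle$ with $\|x\|=1$ produces two scalar inequalities; under the (tacit) positivity hypothesis on $f$ and $g$ carried over from Theorem \ref{t3}, both sides are nonnegative, so the two scalar inequalities may be multiplied pointwise in $t$.

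After multiplying and expanding, the right-hand side splits, with a prefactor $\tfrac14$, into two diagonal terms $\langle f(u)x,x\rangle\langle g(u)x,x\rangle$ and $\langle f(v)x,x\rangle\langle g(v)x,x\rangle$, plus two cross terms $\langle f(u)x,x\rangle\langle g(v)x,x\rangle$ and $\langle f(v)x,x\rangle\langle g(u)x,x\rangle$. Integrating over $[0,1]$ and using the change of variable $t\mapsto 1-t$, each diagonal integral equals $I:=\int_0^1\langle f(tA+(1-t)B)x,x\rangle\langle g(tA+(1-t)B)x,x\rangle\,dt$, which is the integral appearing in (\ref{2.2}); the two diagonal contributions together give $\tfrac12 I$, i.e.\ the first term on the right of (\ref{2.7}).

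For the cross terms, apply the operator-convexity estimates linearly inside each inner product. With $a=\langle f(A)x,x\rangle$, $b=\langle f(B)x,x\rangle$, $c=\langle g(A)x,x\rangle$, $d=\langle g(B)x,x\rangle$, one has $\langle f(u)x,x\rangle\leq ta+(1-t)b$ and $\langle g(v)x,x\rangle\leq (1-t)c+td$, and symmetrically for the other pair. Expanding the products and using the elementary moments $\int_0^1 t^2\,dt=\int_0^1 (1-t)^2\,dt=\tfrac13$ and $\int_0^1 t(1-t)\,dt=\tfrac16$, the sum of the two cross integrals is at most $\tfrac13(ac+bd)+\tfrac23(ad+bc)=\tfrac13 M(A,B)(x)+\tfrac23 N(A,B)(x)$. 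Multiplying by the prefactor $\tfrac14$ produces exactly the coefficients $\tfrac{1}{12}$ and $\tfrac{1}{6}$ appearing in (\ref{2.7}).

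The main obstacle is the legitimacy of the pointwise multiplication of the two midpoint inequalities: this requires all the scalars $\langle f(\cdot)x,x\rangle$ and $\langle g(\cdot)x,x\rangle$ to be nonnegative, i.e.\ the same positivity hypothesis as in Theorem \ref{t3}. Once this is in force, the remainder of the argument is a routine computation with the elementary moments of $t$ and $1-t$ on $[0,1]$ combined with the $t\leftrightarrow 1-t$ symmetry that identifies the two diagonal integrals with a single copy of $I$.
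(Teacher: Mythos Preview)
Your argument is correct and follows essentially the same route as the paper's own proof: write $\frac{A+B}{2}=\frac{u(t)+v(t)}{2}$, apply operator midpoint convexity to $f$ and $g$ separately, multiply the resulting scalar inequalities, split into diagonal and cross terms, bound the cross terms via a second application of operator convexity, and integrate over $[0,1]$. Your explicit remark that the multiplication step requires the nonnegativity hypothesis from Theorem~\ref{t3} is well taken; the paper's statement omits this but its proof tacitly uses it in exactly the same way.
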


\begin{proof}
Since $f$ and $g$ are operator convex, therefore for any $t\in I$ and any $x\in H$ with $\|x\| = 1$ we observe that
\begin{multline*}
\left\langle f\left(\frac{A+B}{2}\right)x,x\right\rangle \left\langle g\left(\frac{A+B}{2}\right)x,x\right\rangle\\
=\left\langle f\left(\frac{tA+(1-t)B}{2}+\frac{(1-t)A+tB}{2}\right)x,x\right\rangle\\
 \times\left\langle g\left(\frac{tA+(1-t)B}{2}+\frac{(1-t)A+tB}{2}\right)x,x\right\rangle
\end{multline*}
\begin{multline*}
\leq\frac{1}{4}[\langle f(tA+(1-t)B)x,x\rangle +\langle f(1-t)A+tB)x,x\rangle]\\
\times[\langle g(tA+(1-t)B)x,x\rangle+\langle g((1-t)A+tB)x,x\rangle]
\end{multline*}
\begin{multline*}
\leq\frac{1}{4}[\langle f(tA+(1-t)B)x,x\rangle \langle g(tA+(1-t)B)x,x\rangle\\
+\langle f((1-t)A+tB)x,x\rangle \langle g((1-t)A+tB)x,x\rangle]\\
+\frac{1}{4}[t\langle f(A)x,x\rangle+(1-t)\langle f(B)x,x\rangle][(1-t)\langle g(A)x,x\rangle+t\langle g(B)x,x\rangle]\\
+[(1-t)\langle f(A)x,x\rangle+t\langle f(B)x,x\rangle][t\langle g(A)x,x\rangle+(1-t)\langle g(B)x,x\rangle]
\end{multline*}
\begin{multline*}
=\frac{1}{4}[\langle f(tA+(1-t)B)x,x\rangle \langle g(tA+(1-t)B)x,x\rangle\\
+\langle f((1-t)A+tB)x,x\rangle\langle g((1-t)A+tB)x,x\rangle]\\
+\frac{1}{4}2t(1-t)[\langle f(A)x,x\rangle\langle g(A)x,x\rangle+\langle f(B)x,x\rangle \langle g(B)x,x\rangle]\\
+(t^2+(1-t)^2)[\langle f(A)x,x\rangle \langle g(B)x,x\rangle+\langle f(B)x,x\rangle \langle g(A)x,x\rangle].
\end{multline*}
 Therefore we get
 \begin{multline}\label{2.8}
\left\langle f\left(\frac{A+B}{2}\right)x,x\right\rangle \left\langle g\left(\frac{A+B}{2}\right)x,x\right\rangle\\
\leq\frac{1}{4}[\langle f(tA+(1-t)B)x,x\rangle \langle g(tA+(1-t)B)x,x\rangle\\
+\langle f((1-t)A+tB)x,x\rangle\langle g((1-t)A+tB)x,x\rangle]\\
+\frac{1}{4}2t(1-t)[\langle f(A)x,x\rangle\langle g(A)x,x\rangle+\langle f(B)x,x\rangle \langle g(B)x,x\rangle]\\
+(t^2+(1-t)^2)[\langle f(A)x,x\rangle \langle g(B)x,x\rangle+\langle f(B)x,x\rangle \langle g(A)x,x\rangle].
\end{multline}

We integrate both sides of (\ref{2.8}) over [0,1] and obtain
\begin{multline*}
\left\langle f\left(\frac{A+B}{2}\right)x,x\right\rangle \left\langle g\left(\frac{A+B}{2}\right)x,x\right\rangle\\
\leq\frac{1}{4}\int_{0}^{1}[\langle f(tA+(1-t)B)x,x\rangle\langle g(tA+(1-t)B)x,x\rangle\\
+\langle f((1-t)A+tB)x,x\rangle\langle g((1-t)A+tB)x,x\rangle]dt\\
+\frac{1}{12}M(A,B)(x)+\frac{1}{6}N(A,B)(x).
\end{multline*}

This implies the required inequality (\ref{2.7}).
\end{proof}

\begin{theorem}\label{t5}
Let $f,g : I \rightarrow \mathbb{R}$ be operator convex functions on the interval $I$. Then
for any selfadjoint operators $A$ and $B$ on a Hilbert space $H$ with spectra in $I$, we have the inequality
\begin{multline}\label{2.9}
\left\langle f\left(\frac{A+B}{2}\right)x,x\right\rangle \int_{0}^{1}\langle g(tA+(1-t)B)x,x\rangle dt\\
+\left\langle g\left(\frac{A+B}{2}\right)x,x\right\rangle \int_{0}^{1}\langle f(tA+(1-t)B)x,x\rangle dt\\
\leq\frac{1}{2}\int_{0}^{1}\langle f(tA+(1-t)B)x,x\rangle\langle g(tA+(1-t)B)x,x\rangle dt\\
+\frac{1}{12} M(A,B)(x)
+\frac{1}{6} N(A,B)(x)
+\left\langle f\left(\frac{A+B}{2}\right)x,x\right\rangle\left\langle g\left(\frac{A+B}{2}\right)x,x\right\rangle.
\end{multline}
\end{theorem}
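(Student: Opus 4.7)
The plan is to proceed in parallel with the proof of Theorem \ref{t4}. Write $h(t):=\langle f(tA+(1-t)B)x,x\rangle$ and $k(t):=\langle g(tA+(1-t)B)x,x\rangle$, so by the preceding Lemma both $h$ and $k$ are scalar convex functions on $[0,1]$; in particular midpoint convexity gives $h(\tfrac12)\le \tfrac12[h(t)+h(1-t)]$ and $k(\tfrac12)\le \tfrac12[k(t)+k(1-t)]$, so the two scalars $h(t)+h(1-t)-2h(\tfrac12)$ and $k(t)+k(1-t)-2k(\tfrac12)$ are nonnegative for every $t\in[0,1]$. My strategy is to multiply these two nonnegative quantities, rearrange the result, integrate over $t$, and then control a single cross-product integral by means of the secant bound coming from operator convexity of $f$ and $g$ separately.

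Concretely, the pointwise inequality
$$[h(t)+h(1-t)-2h(\tfrac12)]\,[k(t)+k(1-t)-2k(\tfrac12)]\ge 0$$
rearranges to
$$2h(\tfrac12)[k(t)+k(1-t)]+2k(\tfrac12)[h(t)+h(1-t)]\le [h(t)+h(1-t)][k(t)+k(1-t)]+4h(\tfrac12)k(\tfrac12).$$
I will then integrate over $t\in[0,1]$ and exploit the symmetry $t\mapsto 1-t$: writing $H=\int_0^1 h(t)\,dt$ and $K=\int_0^1 k(t)\,dt$, the left side collapses to $4h(\tfrac12)K+4k(\tfrac12)H$, while the right side becomes $2\int_0^1 h(t)k(t)\,dt+2\int_0^1 h(t)k(1-t)\,dt+4h(\tfrac12)k(\tfrac12)$ (using $\int h(1-t)k(t)\,dt=\int h(t)k(1-t)\,dt$). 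Dividing by $4$, the proof of (\ref{2.9}) is reduced to verifying
$$\tfrac12\int_0^1 h(t)k(1-t)\,dt \le \tfrac1{12}M(A,B)(x)+\tfrac16 N(A,B)(x).$$

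For this final estimate I apply operator convexity once more: $h(t)\le t\langle f(A)x,x\rangle+(1-t)\langle f(B)x,x\rangle$ and $k(1-t)\le (1-t)\langle g(A)x,x\rangle+t\langle g(B)x,x\rangle$; multiplying these two secant bounds and integrating over $[0,1]$ gives $\int_0^1 h(t)k(1-t)\,dt \le \tfrac16 M(A,B)(x)+\tfrac13 N(A,B)(x)$, which is exactly twice what is required.

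The main delicacy, inherited from the proof of Theorem \ref{t4}, is that both the product-of-two-differences step and the final multiplication of two secant bounds are valid only when the scalars involved share a common sign. This is guaranteed once $f$ and $g$ take nonnegative values on the spectra of $A$ and $B$ (the positivity setting of Theorem \ref{t3}); under that implicit positivity hypothesis the three steps above chain together without further obstruction to give (\ref{2.9}).
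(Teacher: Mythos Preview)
Your proof is correct and follows essentially the same route as the paper: both start from the pointwise inequality $(b-a)(d-c)\ge 0$ with $a=h(\tfrac12)$, $b=\tfrac12[h(t)+h(1-t)]$, $c=k(\tfrac12)$, $d=\tfrac12[k(t)+k(1-t)]$, then estimate the resulting cross-terms $h(t)k(1-t)$ by the secant bounds from operator convexity and integrate (the paper bounds pointwise and then integrates, you integrate and then bound, which is immaterial). Your remark that the secant-product step tacitly requires $f,g\ge 0$ is accurate and applies equally to the paper's own argument; note, however, that the product-of-differences step itself needs only the convexity of $h$ and $k$, not positivity, so the sign hypothesis is required solely for the final multiplication.
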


\begin{proof}
Since $f$ and $g$ are operator convex, then for $t\in[0,1]$ we observe that
\begin{align*}
\left\langle f\left(\frac{A+B}{2}\right)x,x\right\rangle&=\left\langle f\left(\frac{tA+(1-t)B}{2}+\frac{(1-t)A+tB}{2}\right)x,x\right\rangle\\
&\leq\left\langle\frac{f(tA+(1-t)B)+f((1-t)A+tB)}{2}x,x\right\rangle\\
\left\langle g\left(\frac{A+B}{2}\right)x,x\right\rangle&=\left\langle g\left(\frac{tA+(1-t)B}{2}+\frac{(1-t)A+tB}{2}\right)x,x\right\rangle\\
&\leq\left\langle\frac{g(tA+(1-t)B)+g((1-t)A+tB)}{2}x,x\right\rangle.
\end{align*}
we multiply by one under the
other and by one across the other of the above inequality and then we add these inequalities, so we obtain
\begin{multline*}
\left\langle f\left(\frac{A+B}{2}\right)x,x\right\rangle\left\langle\frac{g(tA+(1-t)B)+g((1-t)A+tB)}{2}x,x\right\rangle\\
+\left\langle g\left(\frac{A+B}{2}\right)x,x\right\rangle\left\langle\frac{f(tA+(1-t)B)+f((1-t)A+tB)}{2}x,x\right\rangle\\
\leq\left\langle\frac{f(tA+(1-t)B)+f((1-t)A+tB)}{2}x,x\right\rangle\\
\quad \quad\quad\times\left\langle\frac{g(tA+(1-t)B)+g((1-t)A+tB)}{2}x,x\right\rangle\\
\quad\quad\quad\quad\quad+\left\langle f\left(\frac{A+B}{2}\right)x,x\right\rangle \left\langle g\left(\frac{A+B}{2}\right)x,x\right\rangle.
\end{multline*}
This implies that
\begin{align*}
&\frac{1}{2}\left\langle f\left(\frac{A+B}{2}\right)x,x\right\rangle[\langle g(tA+(1-t)B)x,x\rangle+\langle g((1-t)A+tB)x,x\rangle]\\
&+\frac{1}{2}\left\langle g\left(\frac{A+B}{2}\right)x,x\right\rangle[\langle f(tA+(1-t)B)x,x\rangle+\langle f((1-t)A+tB)x,x\rangle]\\
&\leq\frac{1}{4}[\langle f(tA+(1-t)B)x,x\rangle \langle g(tA+(1-t)B)x,x\rangle\\
&+\langle f((1-t)A+tB)x,x\rangle \langle g((1-t)A+tB)x,x\rangle]\\
&+\frac{1}{4}[t\langle f(A)x,x\rangle+(1-t)\langle f(B)x,x\rangle][(1-t)\langle g(A)x,x\rangle+t\langle g(B)x,x\rangle]\\
&+\frac{1}{4}[(1-t)\langle f(A)x,x\rangle+t\langle f(B)x,x\rangle][t\langle g(A)x,x\rangle+(1-t)\langle g(B)x,x\rangle]\\
&+\left\langle f\left(\frac{A+B}{2}\right)x,x\right\rangle \left\langle g\left(\frac{A+B}{2}\right)x,x\right\rangle.
\end{align*}
Again integration both side of the above inequality over $[0,1]$ and obtain\\
\begin{align*}
&\frac{1}{2}\left\langle f\left(\frac{A+B}{2}\right)x,x\right\rangle\int_0^1[\langle g(tA+(1-t)B)x,x\rangle+\langle g((1-t)A+tB)x,x\rangle]dt\\
&+\frac{1}{2}\left\langle g\left(\frac{A+B}{2}\right)x,x\right\rangle\int_0^1[\langle f(tA+(1-t)B)x,x\rangle+\langle f((1-t)A+tB)x,x\rangle]dt\\
&\leq\frac{1}{4}\int_0^1[\langle f(tA+(1-t)B)x,x\rangle \langle g(tA+(1-t)B)x,x\rangle\\
&\quad\quad\quad\quad\quad\quad+\langle f((1-t)A+tB)x,x\rangle \langle g((1-t)A+tB)x,x\rangle]dt\\
&+\frac{1}{4}M(A,B)(x)\int_{0}^{1}[2t(1-t)]dt+\frac{1}{4}N(A,B)(x)\int_{0}^{1}[t^2+(1-t)^2]dt\\
&+\left\langle f\left(\frac{A+B}{2}\right)x,x\right\rangle \left\langle g\left(\frac{A+B}{2}\right)x,x\right\rangle\int_{0}^{1}dt.
\end{align*}
Then we have the required inequality (\ref{2.9})
\end{proof}

\section{application for synchronous (asynchronous) functions}

We say that the functions $f, g : [a, b]\rightarrow \mathbb{R}$ are synchronous (asynchronous)
on the interval $[a, b]$ if they satisfy the following condition:
$$ (f (t)-f (s)) (g (t)-g (s)) \geq (\leq)~ 0 \text{ for each } t, s\in [a, b]$$
It is obvious that, if $f, g$ are monotonic and have the same monotonicity
on the interval $[a, b]$, then they are synchronous on $[a, b]$ while if they have
opposite monotonicity, they are asynchronous.

The following result provides an inequality of $\check{C}eby\check{s}ev$ type for functions
of selfadjoint operators.
\begin{theorem}\label{t6}{\rm (Dragomir,\cite{dra1})} Let $A$ be a selfadjoint operator
with $Sp (A) \subset [m,M]$ for some real numbers $m < M$, If $f, g : [m,M]\rightarrow \mathbb{R}$
are continuous and synchronous (asynchronous) on $[m,M]$, then
\begin{equation}\label{3.1}
\langle f (A) g (A) x, x\rangle \geq (\leq) \langle f (A) x, x\rangle \langle g (A) x, x\rangle,
\end{equation}
for any $x \in H$ with $\|x\| = 1$.
\end{theorem}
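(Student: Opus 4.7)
The plan is to reduce the operator inequality to a classical real-variable Čebyšev inequality via the spectral theorem, and then prove that real-variable statement by the standard double-integration trick. I will treat the synchronous case; the asynchronous case follows by replacing $g$ with $-g$.

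First I would fix a unit vector $x\in H$ and apply the spectral theorem to $A$. Let $\{E_\lambda\}_{\lambda\in[m,M]}$ be the spectral family of $A$. Since $\|x\|=1$, the map $\mu_x(S):=\langle E(S)x,x\rangle$ is a Borel probability measure on $[m,M]$. For any continuous $h:[m,M]\to\mathbb{R}$ the functional calculus gives
\[
\langle h(A)x,x\rangle=\int_{m}^{M} h(\lambda)\,d\mu_x(\lambda).
\]
Because $f(A)$ and $g(A)$ are both continuous functions of the single selfadjoint operator $A$, they commute, and $f(A)g(A)=(fg)(A)$ in the functional calculus. Hence
\[
\langle f(A)g(A)x,x\rangle=\int_{m}^{M} f(\lambda)g(\lambda)\,d\mu_x(\lambda),
\]
while the right-hand side of (\ref{3.1}) is the product of the two integrals $\int f\,d\mu_x$ and $\int g\,d\mu_x$.

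With these reformulations in hand, the asserted inequality is precisely the Čebyšev integral inequality for the probability measure $\mu_x$ and the synchronous pair $(f,g)$. To prove it, I would apply the standard symmetrization argument: since $f,g$ are synchronous on $[m,M]$, the two-variable function
\[
\Phi(\lambda,\sigma):=(f(\lambda)-f(\sigma))(g(\lambda)-g(\sigma))
\]
is nonnegative on $[m,M]\times[m,M]$. Integrating $\Phi$ against the product measure $\mu_x\otimes\mu_x$ and using Fubini gives
\[
0\le\iint \Phi(\lambda,\sigma)\,d\mu_x(\lambda)\,d\mu_x(\sigma)=2\int fg\,d\mu_x-2\!\left(\int f\,d\mu_x\right)\!\!\left(\int g\,d\mu_x\right),
\]
because $\int d\mu_x=1$ and the cross terms collapse. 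Dividing by $2$ and translating back through the spectral identifications yields
\[
\langle f(A)g(A)x,x\rangle\ge\langle f(A)x,x\rangle\langle g(A)x,x\rangle,
\]
which is exactly (\ref{3.1}). The asynchronous case is obtained either by replacing $g$ by $-g$ or by observing that then $\Phi\le 0$, reversing the inequality throughout.

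There is no real technical obstacle here; the only point that deserves care is the commutativity step $f(A)g(A)=(fg)(A)$, which justifies identifying $\langle f(A)g(A)x,x\rangle$ with $\int fg\,d\mu_x$. Without this observation the operator product on the left of (\ref{3.1}) would not be representable as a single scalar integral and the reduction to the classical Čebyšev inequality would fail.
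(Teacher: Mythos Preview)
Your argument is correct: the reduction via the spectral measure $\mu_x$ to the classical \v{C}eby\v{s}ev integral inequality, followed by the symmetrization identity
\[
\iint (f(\lambda)-f(\sigma))(g(\lambda)-g(\sigma))\,d\mu_x(\lambda)\,d\mu_x(\sigma)=2\int fg\,d\mu_x-2\left(\int f\,d\mu_x\right)\left(\int g\,d\mu_x\right),
\]
is the standard route, and your care with the step $f(A)g(A)=(fg)(A)$ is well placed.

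As for comparison: the paper does not actually prove this theorem. It is quoted as a known result of Dragomir \cite{dra1} and used as a black box to derive the chain $N(A,B)(x)\le M(A,B)(x)\le P(A,B)(x)$ in the applications section. So there is no ``paper's own proof'' to compare against; your proposal supplies a self-contained proof where the paper supplies only a citation.
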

As a simple consequence of the above Theorem we imply that if $f,g$ are synchronous, then
\begin{equation}\label{3.2}
N(A,B)(x)\leq  M(A,B)(x)\leq P(A,B)(x),
\end{equation}
for any $x \in H$ with $\|x\| = 1$. If $f,g$ are asynchronous, then reverse inequalities holds in (\ref{3.2}) as follow,
\begin{equation}\label{3.3}
N(A,B)(x)\geq  M(A,B)(x)\geq P(A,B)(x).
\end{equation}
\begin{remark}\label{r1}
\begin{enumerate}
\rm {Let $f, g : [m,M]\rightarrow \mathbb{R}$ be operator convex and $A,B$ be selfadjoint operator with
$Sp (A)\cup sp(B) \subset [m,M]$
\item[(i)] If $f,g$ are synchronous and $f,g\geq 0$ then the inequality (\ref{2.2}) becomes
\begin{multline}\label{3.4}
\int_{0}^{1}\langle f(tA+(1-t)B)x,x\rangle\langle g(tA+(1-t)B)x,x\rangle dt\\
\leq\frac{1}{3}M(A,B)(x)+\frac{1}{6}N(A,B)(x)
\leq\frac{1}{2}P(A,B)(x).
\end{multline}
If $f,g$ are synchronous then the inequalities (\ref{2.7}) and (\ref{2.9}) become
\begin{multline}\label{3.5}
\left\langle f\left(\frac{A+B}{2}\right)x,x\right\rangle\left\langle g\left(\frac{A+B}{2}\right)x,x\right\rangle\\
\leq\frac{1}{2}\int_{0}^{1}\left\langle f(tA+(1-t)B) g(tA+(1-t)B)x,x\right\rangle dt \\
+\frac{1}{4}P(A,B)(x),
\end{multline}
and
\begin{multline}\label{3.6}
\left\langle f\left(\frac{A+B}{2}\right)x,x\right\rangle \int_{0}^{1}\langle g(tA+(1-t)B)x,x\rangle dt\\
+\left\langle g\left(\frac{A+B}{2}\right)x,x\right\rangle \int_{0}^{1}\langle f(tA+(1-t)B)x,x\rangle dt\\
\leq\frac{1}{2}\int_{0}^{1}\langle f(tA+(1-t)B) g(tA+(1-t)B)x,x\rangle dt\\
+\frac{1}{4} P(A,B)(x)
+\left\langle f\left(\frac{A+B}{2}\right) g\left(\frac{A+B}{2}\right)x,x\right\rangle.
\end{multline}
\item[(ii)] If $f,g$ are asynchronous and $f,g\geq 0$ then the inequality (\ref{2.2}) becomes
\begin{multline}\label{3.7}
\int_{0}^{1}\langle f(tA+(1-t)B) g(tA+(1-t)B)x,x\rangle dt\\
\leq\frac{1}{3}M(A,B)(x)+\frac{1}{6}N(A,B)(x)
\leq\frac{1}{2}N(A,B)(x).
\end{multline}
If $f,g$ are synchronous then the inequalities (\ref{2.7}) and (\ref{2.9}) become
\begin{multline}\label{3.8}
\left\langle f\left(\frac{A+B}{2}\right) g\left(\frac{A+B}{2}\right)x,x\right\rangle\\
\leq\frac{1}{2}\int_{0}^{1}\left\langle f(tA+(1-t)B)x,x\rangle\langle g(tA+(1-t)B)x,x\right\rangle dt \\
+\frac{1}{4}N(A,B)(x)
\end{multline}
and
\begin{multline}\label{3.9}
\left\langle \left[f\left(\frac{A+B}{2}\right) \int_{0}^{1} g(tA+(1-t)B) dt\right.\right.\\
\left.\left.+ g\left(\frac{A+B}{2}\right) \int_{0}^{1} f(tA+(1-t)B)dt\right]x,x\right\rangle \\
\leq\frac{1}{2}\int_{0}^{1}\langle f(tA+(1-t)B)x,x\rangle\langle g(tA+(1-t)B)x,x\rangle dt\\
+\frac{1}{4} N(A,B)(x)
+\left\langle f\left(\frac{A+B}{2}\right)x,x\right\rangle\left\langle g\left(\frac{A+B}{2}\right)x,x\right\rangle.
\end{multline}
}
\end{enumerate}
\end{remark}
\begin{example}
Real functions $f(x)=x$ and $g(x)=x^2$ are synchronous on $[0,1]$. From inequality (\ref{3.4})
we get the inequality
\begin{equation*}
\int_0^1\langle(tA+(1-t)B)x,x\rangle\langle (tA+(1-t)B)^2 x,x\rangle dt\leq\frac{1}{2}P(A,B)(x),
\end{equation*}
which holds for any selfadjoint operators $A$ and $B$ on a Hilbert space $H$ with spectra in $[0,1]$.

Real functions $f,g$ are asynchronous on $[-1,0]$, Now from inequality (\ref{3.8})
we get the inequality
\begin{multline*}
\left\langle\left(\frac{A+B}{2}\right)^3 x,x\right\rangle\\
\leq\frac{1}{2}\int_{0}^{1}\left\langle (tA+(1-t)B)x,x\rangle\langle (tA+(1-t)B)^2x,x\right\rangle dt \\
+\frac{1}{4}N(A,B)(x)
\end{multline*}

which holds for any selfadjoint operators $A$ and $B$ on a Hilbert space $H$ with spectra in $[-1,0]$, where
\begin{align*}
M(A,B)(x)&=\langle Ax,x\rangle\langle A^2x,x\rangle+\langle Bx,x\rangle\langle B^2x,x\rangle,\\
N(A,B)(x)&=\langle Ax,x\rangle\langle B^2x,x\rangle+\langle Bx,x\rangle\langle A^2x,x\rangle,\\
P(A,B)(x)&=\langle (A^3+B^3)x,x\rangle.
\end{align*}
We may obtain other inequalities which follow from (\ref{3.5}), (\ref{3.6}), (\ref{3.7}) and (\ref{3.9}), the details are omitted.
\end{example}


\begin{thebibliography}{99}
\bibitem{dra1}
S.S. Dragomir, \textit{$\check{C}$eby$\check{s}$ev's type inequalities for functions of selfadjoint
operators in Hilbert spaces}, Linear and Multilinear Algebra, \textbf{58} (2010) no. 7-8, 805-814.
\bibitem{dra2}
S.S. Dragomir,\textit{ The Hermite-Hadamard type inequalities for operator convex functions}, Appl. Math. Comput. \textbf{218}(3)(2011), 766-772.
\bibitem{fur}
T. Furuta, J. Mi$\acute{c}$i$\acute{c}$ Hot, J. Pe$\breve{c}$ari$\acute{c}$ and Y. Seo,\textit{ Mond-Pe$\breve{c}$ari$\acute{c}$
 Method in Operator Inequalities. Inequalities for Bounded Selfadjoint Operators on a Hilbert Space}, Element, Zagreb,
2005.
\bibitem{gha}
A .G. Ghazanfari, M. Shakoori, A. Barani and S.S. Dragomir, \textit{The Hermite-Hadamard type inequalities for operator preinvex functions},
Preprint, RGMIA Res. Rep. Coll. \textbf{14} (2011), Art. 83.
\bibitem{pach}
B. G. Pachpatte, \textit{On some inequalities for convex functions}, RGMIA Res.
Rep. Coll., 6 (E), (2003).

\bibitem{tun}
M.Tunc, \textit{On some new inequalities for convex functions}, Turk. J. Math \textbf{35} (2011), 1-7.
\end{thebibliography}
\end{document}